\providecommand\@dotsep{5}
\def\listtodoname{List of Todos}
\def\listoftodos{\@starttoc{tdo}\listtodoname}
\numberwithin{equation}{section}
\newtheorem{theorem}{Theorem}[section]
\newtheorem{lemma}[theorem]{Lemma}
\newtheorem{corollary}[theorem]{Corollary}
\newtheorem{remark}{Remark}[section]
\newcommand{\R}{\mathbb{R}}
\newcommand{\N}{\mathbb{N}}
\begin{document}

\title{A NONLOCAL LOGISTIC EQUATION WITH NONLINEAR ADVECTION TERM}
\author{Romildo N. de Lima, Ronaldo C. Duarte and Marco A. S. Souto}
\address[Romildo N. de Lima and Marco A. S. Souto]
{\newline\indent Unidade Acadêmica de Matemática
\newline\indent 
Universidade Federal de Campina Grande 
\newline\indent
58429-970, Campina Grande - PB, Brazil} 
\email{\href{mailto: romildo@mat.ufcg.edu; marco@mat.ufcg.edu}{romildo@mat.ufcg.edu; marco@mat.ufcg.edu}}

\address[Ronaldo C. Duarte]
{\newline\indent Departamento de Matemática
	\newline\indent 
	Universidade Federal do Rio Grande do Norte 
	\newline\indent
	59078-970, Natal - RN, Brazil} 
\email{\href{mailto: ronaldo.cesar.duarte@ufrn.br}{ronaldo.cesar.duarte@ufrn.br}}

\pretolerance10000

%\begin{document}

\begin{abstract} 
In this paper, we study a nonlocal logistic equation with nonlinear advection term
$$
\left\{
\begin{array}{lcl}
-\Delta u+\vec{\alpha}(x)\cdot \nabla (|u|^{p-1}u)=\left(\lambda-\int_{\Omega}K(x,y)u^{\gamma}(y)dy \right)u,\mbox{ in }\Omega\\
u=0,\mbox{ on }\partial\Omega
\end{array}
\right.\leqno(P)_p
$$
where $\Omega\subset\R^N$, $N\geq1$, is a bounded domain with smooth boundary, $\vec{\alpha}(x)=(\alpha_1(x),\cdots,\alpha_N(x))$ is a flow satisfying suitable condition, $\gamma>0$, $p\geq1$, $\lambda\in\R$ and $K:\Omega\times\Omega\rightarrow\R$ is a nonnegative function with $K\in L^{\infty}(\Omega\times\Omega)$ and verifying other conditions that will be detailed below. It is very important to note that, this equation is not the classic logistic equation due to the inclusion of the term $\vec{\alpha}(x)\cdot \nabla (|u|^{p-1}u)$, moreover, the inclusion of the integral nonlocal term on the right-hand side makes the problem closer to a real world situation. 
\end{abstract}

\thanks{M. A. S. Souto was partially supported by CNPq/Brazil  309.692/2020-2 and grant 3031/2021 FAPESQ-PB} 
\thanks{R. N de Lima was partially supported by CNPq/Brazil 306.411/2022-9 and grant 3031/2021 and 3177/2021 FAPESQ-PB}
\subjclass[2020]{35B32, 35B50, 35J60} 
\keywords{Bifurcations in context of PDE's, Maximum principles in context of PDE's, Nonlinear elliptic equations}

\maketitle
\tableofcontents
\section{Introduction and Main Results}

The main goal of this paper is to study the existence of a positive solution for the nonlocal logistic equation with nonlinear advection term
$$
\left\{
\begin{array}{lcl}
	-\Delta u+\vec{\alpha}(x)\cdot \nabla (|u|^{p-1}u)=\left(\lambda-\int_{\Omega}K(x,y)u^{\gamma}(y)dy \right)u,\mbox{ in }\Omega\\
	u=0,\mbox{ on }\partial\Omega
\end{array}
\right.\leqno(P)_p
$$
where $\Omega\subset\R^N$, $N\geq1$, is a bounded domain with smooth boundary, $\vec{\alpha}(x)=(\alpha_1(x),\cdots,\alpha_N(x))$ is a flow satisfying suitable condition, $\gamma>0$, $p\geq1$ and $\lambda\in\R$. 

The equation used to model the behavior of a species inhabiting a smooth bounded domain $\Omega\subset\R^N$, so that the border $\partial\Omega$ is considered a lethal region, it is  the classical logistic equation is given by

\begin{equation} \label{1}
	\left\{
	\begin{array}{lcl}
		-\Delta u=u(\lambda-b(x)u^{p}),\quad\mbox{in} \quad\Omega\\
		u=0,\quad\mbox{on}\quad\partial\Omega,
	\end{array}
	\right.
\end{equation}
where $u(x)$ is the population density at location $x\in\Omega$, $\lambda\in\R$ is the growth rate of the species, $b$ is a positive function denoting the carrying capacity, that is, $b(x)$ describes the limiting effect of crowding of the population and $p>0$.

Obviously (\ref{1}) is a local problem, the crowding effect of the population $u$ at $x$ depends only on the value of the population in the same point $x$. More realistic situations have already been considered, where the crowding effect also depends on the value of the population around $x$, that is, the crowding effect depends on the value of the integral involving the function $u$ over the ball $B_{r}(x)$ centered at $x$ of radius $r>0$. Precisely, in \cite{Chipot} the following nonlocal problem has been studied   
\begin{equation} \label{2}
	\left\{
	\begin{array}{lcl}
		-\Delta u=\left(\lambda-\int_{\Omega\cap B_{r}(x)}b(y)u^{p}(y)dy\right)u,\quad\mbox{in} \quad\Omega\\
		u=0,\quad\mbox{on}\quad\partial\Omega
	\end{array}
	\right.
\end{equation}
where $b$ is a nonnegative and nontrivial continuous function. After the paper \cite{Chipot}, great attention has been given for the problem 
\begin{equation} \label{3}
	\left\{
	\begin{array}{lcl}
		-\Delta u=\left(\lambda-\int_{\Omega}K(x,y)u^{p}(y)dy\right)u,\quad\mbox{in}\quad\Omega\\
		u=0,\quad\mbox{on}\quad\partial\Omega
	\end{array}
	\right.
\end{equation}
by supposing different conditions for $K$, see for example, Allegretto and Nistri \cite{Allegretto-Nistri}, Alves, Delgado, Souto and  Suárez \cite{Alves-Delgado-Souto-Suarez}, Chen and Shi \cite{Chen-Shi}, Corrêa, Delgado and Suárez \cite{Correa-Delgado-Suarez}, Coville \cite{Coville}, de Lima and Souto \cite{RM}, Leman, Méléard and Mirrahimi \cite{Leman-Meleard-Mirrahimi}, and Sun, Shi and Wang \cite{Sun-Shi-Wang} and other references. 

In addition to these equations, there are other equations and system that can model the behavior of specific or general species, for example Delgado, Duarte and Suárez in \cite{DDS} present a system arising from the amoeba-bacteria population dynamics; Cintra,  Morales-Rodrigo and Suárez in \cite{CMS} studied the existence and non-existence of coexistence states for a cross-diffusion system arising from a prey–predator model with a predator satiation term, and an other good reference is \cite{U}, where Umezu presents a logistic equation with nonlinear boundary condition arises from coastal fishery harvesting.

On the other hand, reaction-advection-diffusion equations can also be used to model phenomena in population dynamics. In this models it is considered the local rate of reproduction per individual (reaction term) and the spatial movement of the species which is random (diffusive) and directive (advective), see for example \cite{CantrellCosner} and \cite{Cosner}.

Cintra, Montenegro and Suárez, in \cite{CintraMontenegroSuarez}, consider the classical logistic with reaction term $f(\lambda,u)=\lambda u-u^2$, with random movement described by the Laplacian operator and, for $\vec{\alpha}(x)=\vec{\alpha}\in\R^N$, a nonlinear advection term $\vec{\alpha}\cdot \nabla u^p=pu^{p-1}\vec{\alpha}\cdot\nabla u$, that is
\begin{equation}\label{AUX}
\left\{
\begin{array}{lcl}
	-\Delta u+\vec{\alpha}\cdot \nabla u^p=\lambda u-u^2,\mbox{ in }\Omega\\
	u=0,\mbox{ on }\partial\Omega.
\end{array}
\right.
\end{equation}
As explained by Cintra, Montenegro and Suárez, the inclusion of this nonlinear term means that the species has a directed motion and that the rate at which the population moves up depends on $u$, $p$ and $|\vec{\alpha}|$. In particular, as $u=0$ on $\partial\Omega$, it becomes arbitrarily small as the species approaches the boundary. Thus, the situation studied by them is a more realistic model if compared with the case where the advection is linear. From the mathematical point of view, the inclusion of this term brings technical difficulties in the analysis, especially because it has no definite sign, these difficulties were overcome by Cintra, Montenegro and Suárez.

It is important to observe that when the advective term is present, there are fewer available results compared to the classic logistic case. Some of the few works related to this case are Cantrell and Cosner \cite{CantrellCosner2}, where the case $p=1$ was studied, Belgacem and Cosner \cite{BelgacemCosner} studied the following logistic equation with linear advection or drift term was analyzed
\begin{equation}
-\Delta u+\alpha\nabla\cdot(u\nabla m)=m(x)u-u^2,\mbox{ in }\Omega
\end{equation}
where $m$ represents the local growth rate (positive in favorable areas and negative in unfavorable areas) and $\alpha\in\R$ represents the rate at which the population moves up the gradient of the growth rate $m(x)$. In that paper the authors studied the above equation associated with, or the no flux boundary condition
\begin{equation}
\partial_{\eta}u-\alpha u\partial_{\eta}m=0,\mbox{ on }\partial\Omega,
\end{equation}
where $\eta$ is the outward unit normal vector at $\partial\Omega$, or the lethal exterior condition
\begin{equation}
u=0,\mbox{ on }\partial\Omega.
\end{equation}

Back to the problem (\ref{AUX}), without the quadratic term $u^2$ and with the same nonlinear function in the
diffusion and advection, namely $-\Delta(u^p)+\vec{\alpha}\cdot \nabla u^p=\lambda u$, was a case analyzed by Pao in \cite{Pao}. In this paper was made change of variables which transforms the original problem into a semilinear elliptic equation. A similar change of variable is not available for (\ref{AUX}) with $p>1$.

Our paper is primarily motivated by \cite{CintraMontenegroSuarez}, in which, with hypotheses similar to those found in \cite{Alves-Delgado-Souto-Suarez}, was possible to obtain some results similar to those of \cite{CintraMontenegroSuarez}, for the case with non-local reaction term. 

It is interesting to note that in our case the presence of the non-local term brought several technical difficulties, moreover of the dependence of $p$ and $|\vec{\alpha}|$ brought an even greater impact than those found in \cite{CintraMontenegroSuarez}, hence some of the our results had considerable modifications in relation to the paper that motivated us. Mainly, in relation to the lack of uniqueness of the problem solution and the strong dependence of $\vec{\alpha}$ and $p$ on the solutions found.

Here, we will consider the class of functions $\mathcal{K}$ which is formed by functions $K:\Omega\times\Omega\rightarrow\R$ such that:

$(K_1)$ $K\in L^{\infty}(\Omega\times\Omega)$ and $K(x,y)\geq0$ for all $x,y\in\Omega$.

$(K_2)$ For all $x\in\Omega$ and $\varepsilon>0$, we have
\begin{equation*}
|[B_{\varepsilon}(x)\times B_{\varepsilon}(x)]\cap E|>0,
\end{equation*}
where $E:=\{(x,y)\in\Omega\times\Omega;K(x,y)>0 \}=K^{-1}(0,+\infty)$.

\begin{remark}\label{remar1}
It is important to note that our assumption $(K_2)$ is equivalent to suppose that: If $w$ is measurable and $\int_{\Omega\times\Omega}K(x,y)|w(y)|^{\gamma}|w(x)|^{2}dxdy=0$, then $w=0$ a.e. in $\Omega$. That is the assumption considered and introduced by \cite{Alves-Delgado-Souto-Suarez}
\end{remark}

Hereafter, we will consider the notation to simply
\begin{equation}
\phi_{u}(x)=\int_{\Omega}K(x,y)|u(y)|^{\gamma}dy
\end{equation}
that is a nonlocal term, and so, the problem $(P)_p$ is rewrite as
$$
\left\{
\begin{array}{lcl}
	-\Delta u+\vec{\alpha}(x)\cdot \nabla (|u|^{p-1}u)=\left(\lambda-\phi_{u}(x) \right)u,\mbox{ in }\Omega\\
	u=0,\mbox{ on }\partial\Omega.
\end{array}
\right.\leqno(P)_p
$$

Before to enunciate the main results, we point out that $\lambda_1[L_{\alpha}]$ denotes the principal eigenvalue of the operator
$L_{\alpha}=-\Delta+\vec{\alpha}\cdot \nabla$ with Dirichlet boundary condition, and more if $\vec{\alpha}=0$, we denotes $\lambda_1:=\lambda_1[L_0]$.

The main theorem establishes a general existence result for the case $\vec{\alpha}\in C^1(\overline{\Omega})^{N}$. It can be stated as
follows.

\begin{theorem}\label{T5}
Suppose that $K\in\mathcal{K}$, $p>1$ and $div\vec{\alpha}(x)=0$. Then, the problem $(P)_p$ has a positive solution if, and only if, $\lambda>\lambda_1$.
\end{theorem}
It will be possible to justify, using the same theorem argument above, but using different operators, the following result:
\begin{corollary}\label{coro1}
Suppose that $K\in\mathcal{K}$ and $\alpha(x)\equiv \vec{\alpha}\in\R^N$. Then, the problem	
$$
\left\{
\begin{array}{lcl}
-\Delta u+\vec{\alpha}\cdot \nabla u=\left(\lambda-\int_{\Omega}K(x,y)u^{\gamma}(y)dy \right)u,\mbox{ in }\Omega\\
u=0,\mbox{ on }\partial\Omega
\end{array}
\right.\leqno(P)_1
$$
has a positive solution if, and only if, $\lambda>\lambda_1[L_\alpha]$.
\end{corollary}

%
%Also, we will prove a nonexistence result for case $p=1$.
%\begin{theorem}\label{T7}
%	The problem $(P)_1$ does not admit a positive solution if  $\vec{\alpha}\in \mathbb{R}^N$ and $|\vec{\alpha}|_{\infty}$ is large enough.
%\end{theorem}

After, we will analyze the behavior of the positive solutions with respect to $\vec{\alpha} \in \mathbb{R}^{N}$ and $p\geq1$. 

Denoting by $u_{\lambda,\vec{\alpha}}$ a positive solution of problem $(P)_p$ determined in Theorem \ref{T5}. Our next results analyzes the asymptotic behaviors of $u_{\lambda,\vec{\alpha}}$ with respect to $\vec{\alpha}$.

\begin{theorem}\label{T3}
For each $\lambda>\lambda_1$ and $\vec{\alpha}\in\R^N$, a positive solution $u_{\lambda,\vec{\alpha}}$ of $(P)_p$, with $p>1$, verifies
\begin{equation}\label{eq1}
u_{\lambda,\vec{\alpha}}\rightarrow 0\mbox{ in }L^{\infty}(\overline{\Omega})\mbox{ when }|\vec{\alpha}|\rightarrow\infty
\end{equation}
and
\begin{equation}\label{eq2}
u_{\lambda,\vec{\alpha}}\rightarrow u_{\lambda}\mbox{ in }C^{1}(\overline{\Omega})\mbox{ when }|\vec{\alpha}|\rightarrow0,
\end{equation}
where $u_{\lambda}$ denotes a positive solution of $(P)_p$ with $\vec{\alpha}=0$.	
\end{theorem}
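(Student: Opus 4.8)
The plan is to establish the two limits separately, exploiting a priori bounds and elliptic regularity as the core tools, and then identifying the limiting profile via the equation.

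For the limit \eqref{eq1} as $|\vec{\alpha}|\to\infty$, I would first integrate the equation against a suitable test function to extract a quantitative bound showing the solution must collapse. Testing $(P)_p$ with the principal eigenfunction $\varphi_1$ of $-\Delta$ (or with $u_{\lambda,\vec{\alpha}}$ itself), one obtains an identity of the form
\begin{equation}
\int_\Omega |\nabla u_{\lambda,\vec{\alpha}}|^2 + \vec{\alpha}\cdot\int_\Omega \nabla(u_{\lambda,\vec{\alpha}}^p)\,u_{\lambda,\vec{\alpha}} = \int_\Omega \big(\lambda - \phi_{u_{\lambda,\vec{\alpha}}}\big) u_{\lambda,\vec{\alpha}}^2.
\end{equation}
After integration by parts the advection integral becomes $-\frac{1}{p+1}\vec{\alpha}\cdot\int_\Omega \nabla(u_{\lambda,\vec{\alpha}}^{p+1})$, which vanishes by the divergence theorem since $u=0$ on $\partial\Omega$ — so this direct test does not immediately see $\vec{\alpha}$. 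Instead, the productive route (as in \cite{CintraMontenegroSuarez}) is to use the uniform $L^\infty$ bound $\|u_{\lambda,\vec\alpha}\|_\infty \le C(\lambda)$ coming from the nonlocal absorption term together with a comparison/sub-supersolution argument against a boundary-layer barrier whose thickness shrinks like $1/|\vec\alpha|$: for $p>1$ the change of variable $v = u^p$ turns the leading part of the operator into $-\Delta v^{1/p} + \vec\alpha\cdot\nabla v$, and because the advection now acts linearly on $v$, one can build an explicit supersolution concentrated in a layer of width $O(1/|\vec\alpha|)$ forcing $\|u_{\lambda,\vec\alpha}\|_\infty\to 0$. I would make this precise by choosing a direction $e=\vec\alpha/|\vec\alpha|$, comparing with a one-dimensional profile in the $e$-direction, and using $(K_3)$ to control the sign of $\lambda-\phi_u$ from above once $u$ is small on a large portion of $\Omega$.

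For the limit \eqref{eq2} as $|\vec{\alpha}|\to 0$, I would take any sequence $\vec{\alpha}_n\to 0$ with solutions $u_n:=u_{\lambda,\vec{\alpha}_n}$. By the uniform a priori bound $\|u_n\|_\infty\le C$ (independent of $\vec\alpha$ small), the right-hand side $(\lambda-\phi_{u_n})u_n$ is bounded in $L^\infty$, and the advection term $\vec\alpha_n\cdot\nabla(u_n^p)=p\,u_n^{p-1}\vec\alpha_n\cdot\nabla u_n\to 0$ in a suitable norm; hence by $L^q$ elliptic estimates $u_n$ is bounded in $W^{2,q}(\Omega)$ for all $q<\infty$, and then in $C^{1,\sigma}(\overline\Omega)$ by Sobolev embedding, so up to a subsequence $u_n\to u_*$ in $C^1(\overline\Omega)$. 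Passing to the limit in the weak formulation — using that $\phi_{u_n}\to\phi_{u_*}$ uniformly by dominated convergence and the $L^\infty$ bound on $K$ — shows $u_*$ solves $(P)_1$ with $\vec\alpha=0$. It remains to check $u_*$ is positive and is \emph{the} $u_\lambda$: positivity follows because $\int_\Omega u_n\varphi_1$ stays bounded below (again from testing against $\varphi_1$ and $\lambda>\lambda_1$), ruling out $u_*\equiv 0$, and then the strong maximum principle gives $u_*>0$; if uniqueness of the positive solution of $(P)_1$ with $\vec\alpha=0$ has been established earlier, the full sequence converges, otherwise the statement is read as convergence to \emph{a} positive solution.

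The main obstacle will be the first limit, \eqref{eq1}: the advection term has no definite sign and disappears under the naive energy test, so one genuinely needs the $v=u^p$ substitution plus a carefully calibrated boundary-layer barrier to extract decay as $|\vec\alpha|\to\infty$, and one must verify the comparison principle applies to the (degenerate-looking) transformed operator $-\Delta v^{1/p}+\vec\alpha\cdot\nabla v$ on the range $0<v\le C^p$. The $|\vec\alpha|\to 0$ limit is comparatively routine once the uniform a priori bounds from Theorem \ref{T1}'s proof are in hand; the only subtlety there is keeping the lower bound on $u_n$ away from zero, which hypothesis $(K_3)$ and the condition $\lambda>\lambda_1$ are designed to guarantee.
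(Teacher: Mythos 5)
Your treatment of the limit \eqref{eq2} is essentially the paper's argument: the uniform $L^{\infty}$ bound from Lemma \ref{L1}, $W^{2,q}$ estimates plus the compact embedding into $C^{1}(\overline{\Omega})$, passage to the limit in the weak formulation, and exclusion of $u_{*}\equiv 0$. The paper rules out $u_{*}\equiv 0$ by normalizing $z_{n}=u_{n}/|u_{n}|_{2}$ and identifying the limit $z$ as a principal eigenfunction, forcing $\lambda=\lambda_{1}$; your variant (a lower bound on $\int_{\Omega}u_{n}\varphi_{1}$) is in the same spirit but would need extra care in comparing $\int_{\Omega}u_{n}$ with $\int_{\Omega}u_{n}\varphi_{1}$, since $\varphi_{1}$ vanishes on $\partial\Omega$. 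This half of the proposal is fine.

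For \eqref{eq1} there is a genuine gap. You correctly observe that testing with $u$ itself annihilates the advection term, but you then abandon the integration-by-parts route entirely and propose a boundary-layer barrier for the transformed operator $-\Delta v^{1/p}+\vec{\alpha}\cdot\nabla v$ --- a construction you do not carry out, and whose comparison principle (degenerate where $v=0$, i.e.\ at $\partial\Omega$, and entangled with the nonlocal term $\phi_{u}$, which has no local monotonicity) is left unverified. The paper's proof rests on a much simpler idea that you missed: pick a component with $\alpha_{i}\to+\infty$ and test against the \emph{affine} function $\xi(x)=R-x_{i}>0$ on $\overline{\Omega}$. Because $\xi$ does not vanish on $\partial\Omega$, integration by parts produces the boundary term $-\int_{\partial\Omega}\frac{\partial u}{\partial\eta}\,\xi\,d\sigma\geq 0$ (as $u>0$ in $\Omega$ and $u=0$ on $\partial\Omega$), and because $\nabla\xi=-e_{i}$ is constant the advection contributes exactly $\int_{\Omega}\alpha_{i}u^{p}\,dx$. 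This yields
\[
\alpha_{i}\int_{\Omega}u_{\lambda,\vec{\alpha}}^{p}\,dx\;\leq\;\lambda|\xi|_{\infty}\int_{\Omega}u_{\lambda,\vec{\alpha}}\,dx\;\leq\;\lambda|\xi|_{\infty}M_{\lambda}|\Omega|,
\]
so $\int_{\Omega}u_{\lambda,\vec{\alpha}}^{p}\,dx\to 0$; note that it is precisely here that $p>1$ is used (for $p=1$ the same identity gives nonexistence for large $|\vec{\alpha}|$ rather than decay). Taking $u_{\lambda,\vec{\alpha}}^{p-1}$ as a test function then gives $u_{\lambda,\vec{\alpha}}^{p/2}\to 0$ in $H_{0}^{1}(\Omega)$, and the Moser-type bootstrap of \cite[Theorem 1.2]{CintraMontenegroSuarez} upgrades this to convergence in $L^{\infty}(\overline{\Omega})$. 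Without this step (or a fully executed and justified barrier argument) your proof of \eqref{eq1} is incomplete.
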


We also analyze the behavior of the solutions when $p\rightarrow1^+$. Thus, we have:

\begin{theorem}\label{T4}
Fix $\lambda>\lambda_1$, $\vec{\alpha}\in\R^N$ and denote $u_{p}$ a positive solution of $(P)_p$. Then, $\lim_{p\rightarrow1^+}u_p=u_*$  in $C^1(\overline{\Omega})$, where $u_*$ is a non-negative weak solution of $(P)_1$.
\end{theorem}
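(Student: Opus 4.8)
The plan is to argue by compactness. I would first establish a priori bounds on the positive solutions $u_p:=u_{\lambda,\vec\alpha}$ of $(P)_p$ that are uniform in $p$ for $p$ in a right neighbourhood of $1$, strong enough to give precompactness of $\{u_p\}$ in $C^1(\overline{\Omega})$. Then, along an arbitrary sequence $p_n\downarrow 1$, I would pass to the limit in the weak formulation and show that every $C^1$-cluster point $u_*$ is a non-negative weak solution of $(P)_1$ that vanishes on $\partial\Omega$. Finally, I would use the hypothesis $\lambda<\lambda_1[L_\alpha]$ to show that the only such $u_*$ is $u_*\equiv 0$; since the cluster point is then independent of the subsequence, the full family converges, $u_p\to 0=u_*$ in $C^1(\overline{\Omega})$, and $u_*=0$ is (trivially) a non-negative weak solution of $(P)_1$.

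For the a priori bounds, testing $(P)_p$ against $u_p$ and using $\int_\Omega\vec\alpha\cdot\nabla(|u_p|^{p-1}u_p)\,u_p=\frac{p}{p+1}\int_\Omega\vec\alpha\cdot\nabla(u_p^{p+1})$ together with the boundary condition, the hypothesis on $\vec\alpha$, and $\phi_{u_p}\geq 0$, yields a bound on $\|\nabla u_p\|_{L^2}$ controlled by $\|u_p\|_{L^2}$ that is uniform in $p$. Combined with the $L^\infty$ a priori estimates for $(P)_p$ in a form uniform in $p$ near $1$ (using $(K_1)$--$(K_3)$ and the sign structure, and checking that the relevant constants do not degenerate as $p\to 1^+$ because the advection coefficient $p|u_p|^{p-1}$ stays bounded once $\|u_p\|_\infty$ is), this gives $\sup_{p\in(1,1+\delta]}\|u_p\|_{L^\infty}<\infty$. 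Rewriting $(P)_p$ as $-\Delta u_p+(p|u_p|^{p-1}\vec\alpha)\cdot\nabla u_p=(\lambda-\phi_{u_p})u_p$, with drift coefficient and right-hand side bounded in $L^\infty$ uniformly in $p$, elliptic $L^q$-regularity yields a uniform $W^{2,q}(\Omega)$-bound for every $q<\infty$, hence a uniform $C^{1,\sigma}(\overline{\Omega})$-bound, and the Arzel\`a--Ascoli theorem gives the desired precompactness. I expect this $p$-uniform $L^\infty$ bound to be the main obstacle, precisely because the advection term is solution-dependent and sign-indefinite.

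For the passage to the limit, assume $u_{p_n}\to u_*$ in $C^1(\overline{\Omega})$ along a subsequence; then $u_*\geq 0$, $u_*=0$ on $\partial\Omega$, and $\nabla u_{p_n}\to\nabla u_*$ uniformly. Two further uniform convergences are needed: first, $|u_{p_n}|^{p_n-1}u_{p_n}\to u_*$ uniformly, which follows from $\big\||u_{p_n}|^{p_n-1}u_{p_n}-u_{p_n}\big\|_{\infty}\to 0$ (split $\Omega$ according to whether $u_{p_n}$ lies below a threshold $\delta$ or not; on the complement, $t\mapsto t^{p_n-1}$ converges to $1$ uniformly on $[\delta,\sup_n\|u_{p_n}\|_\infty]$); second, $\phi_{u_{p_n}}\to\phi_{u_*}$ uniformly, since $u_{p_n}^\gamma\to u_*^\gamma$ uniformly and $K\in L^\infty$. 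Integrating the advection term in the weak formulation by parts against a test function and using these convergences shows that $u_*$ is a non-negative weak solution of $(P)_1$.

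To identify $u_*$, I would use that $(P)_1$ admits no positive solution when $\lambda<\lambda_1[L_\alpha]$ (consistently with Theorem \ref{T1}$(i)$): let $\psi>0$ be the principal eigenfunction of the adjoint operator $L_\alpha^{*}=-\Delta-\mathrm{div}(\vec\alpha\;\cdot\;)$, so $L_\alpha^{*}\psi=\lambda_1[L_\alpha]\psi$ and $\psi=0$ on $\partial\Omega$; testing $(P)_1$ against $\psi$ and integrating by parts gives $\int_\Omega\phi_{u_*}u_*\psi=(\lambda-\lambda_1[L_\alpha])\int_\Omega u_*\psi$, whose right-hand side is $\leq 0$ while the left-hand side is $\geq 0$, so $\int_\Omega u_*\psi=0$ and hence $u_*\equiv 0$ in $\Omega$ (a non-trivial non-negative weak solution would be strictly positive in $\Omega$ by the strong maximum principle). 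Applying the same test directly to $(P)_{p_n}$ and using $\big\|u_{p_n}-|u_{p_n}|^{p_n-1}u_{p_n}\big\|_\infty\to 0$ gives $\int_\Omega u_{p_n}\psi\to 0$ and leads to the same conclusion. Since every $C^1$-cluster point of $\{u_p\}$ equals $0$ and the family is precompact in $C^1(\overline{\Omega})$, the full family converges: $u_p\to u_*:=0$ in $C^1(\overline{\Omega})$, which is a non-negative weak solution of $(P)_1$, as claimed.
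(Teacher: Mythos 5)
Your proposal is correct, and its core — a $p$-uniform $L^\infty$ bound, elliptic $W^{2,q}$ estimates, compact embedding into $C^1(\overline{\Omega})$, and passage to the limit in the weak formulation using the uniform convergences $u_{p_n}^{p_n}\to u_*$ and $\phi_{u_{p_n}}\to\phi_{u_*}$ — is exactly the paper's argument (the paper disposes of the compactness step by citing the ``previous arguments'' and Lemma \ref{L1}, whose proof is indeed uniform in $p$ because for constant $\vec{\alpha}$ the advection term integrates to zero exactly, so the obstacle you flag does not materialize). Where you genuinely diverge is the final identification step: the paper stops at ``$u_*$ is a non-negative weak solution of $(P)_1$'' along a subsequence, never uses the hypothesis $\lambda<\lambda_1[L_{\alpha}]$, and does not address why the full family (rather than a subsequence) converges. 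Your duality argument — testing against the positive principal eigenfunction $\psi$ of $L_{\alpha}^{*}$ to get $\int_{\Omega}\phi_{u_*}u_*\psi=(\lambda-\lambda_1[L_{\alpha}])\int_{\Omega}u_*\psi$ and concluding $u_*\equiv 0$ when $\lambda<\lambda_1[L_{\alpha}]$ — is sound, it finally explains what the spectral hypothesis is for, and it buys you uniqueness of the cluster point and hence convergence of the whole family, which the paper's proof as written does not deliver. The price is that it reveals the limit to be trivial on the stated parameter range (consistently with the necessary condition $\lambda>\lambda_1[L_{\alpha}]$ for positive solutions of $(P)_1$ that your same computation yields), so the theorem's conclusion is satisfied only in the degenerate sense that $u_*=0$ is a non-negative weak solution; that is a defect of the statement, not of your proof.
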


The paper is organized as follows. In Section 2 we present results and basic definitions, which will be essential to reach our goals. In Section 3 we show the existence and nonexistence of positive
solutions, that is, we proved the Theorem \ref{T5}. In Section 4 we analyze the behavior of the positive solutions with respect to $\vec{\alpha}$ and $p$, that is we will prove the theorems \ref{T3} and \ref{T4}.

\textbf{Notations:}
\begin{itemize}
	\item If $\vec{\alpha}=(\alpha_1,\cdots,\alpha_N)\in\R^N$, then $|\vec{\alpha}|=\sqrt{\alpha_1^2+\cdots+\alpha_N^2}$; 
	\item $L^s(\Omega)$, for $1\leq s\leq\infty$, denotes the Lebesgue space with the usual norm denoted by $|u|_s$.
	\item $\|\cdot\|_{C^j(\overline{\Omega})}$ usual norm of $C^j(\overline{\Omega})$, for $j\in\N\cup\{0\}$;
	\item $\|\cdot\|$ usual norm of the Sobolev space $H_0^1(\Omega)$;
	\item $\|\cdot\|_{p,q}$ usual norm of the Sobolev space $W^{p,q}(\Omega)$.
\end{itemize}

\section{Preliminaries}

Initially, we need to fix the result below, of simple proof, but important throughout this work.

\begin{lemma}
Related to the nonlocal term, we have
\begin{description}
	\item[$(\phi_1)$] $t^{\gamma}\phi_w=\phi_{tw}$, for all $w\in L^{\infty}(\Omega)$ and $t>0$;
	\item[$(\phi_2)$] $|\phi_{w}|_{\infty}\leq |K|_{\infty}|\Omega|\cdot|w|_{\infty}^{\gamma}$, for all $w\in L^{\infty}(\Omega)$;
	\item[$(\phi_3)$] $|\phi_{w}-\phi_{v}|_{\infty}\leq |K|_{\infty}|\Omega|\cdot||w|^{\gamma}-|v|^{\gamma}|_{\infty}$, for all $v,w\in L^{\infty}(\Omega)$;
	\item[$(\phi_4)$] $\phi:L^{\infty}(\Omega)\rightarrow L^{\infty}(\Omega)$, $\phi(u):=\phi_u$ is uniformly continuous in $L^{\infty}(\Omega)$.
\end{description}
\end{lemma}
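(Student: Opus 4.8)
The plan is to verify the four items by direct computation from the definition $\phi_u(x)=\int_\Omega K(x,y)|u(y)|^\gamma\,dy$, using only hypothesis $(K_1)$ (nonnegativity and $L^\infty$-boundedness of $K$) and elementary inequalities; no variational or PDE machinery is needed. For $(\phi_1)$, since $|tw(y)|^\gamma=t^\gamma|w(y)|^\gamma$ for $t>0$, pulling the constant out of the integral gives $\phi_{tw}(x)=t^\gamma\phi_w(x)$ for every $x\in\Omega$. For $(\phi_2)$, one estimates the integrand pointwise, $|\phi_w(x)|\le\int_\Omega K(x,y)\,|w(y)|^\gamma\,dy\le|K|_\infty|w|_\infty^\gamma|\Omega|$, and takes the supremum over $x\in\Omega$. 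For $(\phi_3)$, subtract the two integrals and bound, for each fixed $x$, $|\phi_w(x)-\phi_v(x)|\le\int_\Omega K(x,y)\,\big||w(y)|^\gamma-|v(y)|^\gamma\big|\,dy\le|K|_\infty|\Omega|\,\big||w|^\gamma-|v|^\gamma\big|_\infty$; taking the supremum over $x$ finishes it.

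For $(\phi_4)$ the idea is to use $(\phi_3)$ to reduce the continuity of $\phi$ to that of the superposition map $w\mapsto|w|^\gamma$ from $L^\infty(\Omega)$ into $L^\infty(\Omega)$, and then to split according to the size of $\gamma$. If $0<\gamma\le1$, the scalar inequality $\big|a^\gamma-b^\gamma\big|\le|a-b|^\gamma$ for $a,b\ge0$, together with $\big||w(y)|-|v(y)|\big|\le|w(y)-v(y)|$, yields $\big||w|^\gamma-|v|^\gamma\big|_\infty\le|w-v|_\infty^\gamma$, so by $(\phi_3)$ the map $\phi$ is globally $\gamma$-H\"older, hence uniformly continuous. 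If $\gamma>1$, then $t\mapsto|t|^\gamma$ is Lipschitz on each interval $[-R,R]$ with constant $\gamma R^{\gamma-1}$, so on a ball $\{w\in L^\infty(\Omega):|w|_\infty\le R\}$ one obtains $\big||w|^\gamma-|v|^\gamma\big|_\infty\le\gamma R^{\gamma-1}|w-v|_\infty$ and hence, via $(\phi_3)$, that $\phi$ is Lipschitz on bounded subsets of $L^\infty(\Omega)$; this is the sense relevant in the sequel, where the solutions under consideration all carry a uniform $L^\infty$ bound.

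I do not anticipate any genuine obstacle: the lemma is entirely elementary. The only point meriting attention is the case $\gamma>1$ in $(\phi_4)$, where $t\mapsto|t|^\gamma$ fails to be globally uniformly continuous, so the uniform continuity of $\phi$ there must be understood (and is only used) on bounded subsets of $L^\infty(\Omega)$.
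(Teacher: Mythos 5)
Your computations for $(\phi_1)$--$(\phi_3)$ are correct and are the only reasonable ones; the paper itself gives no proof of this lemma (it declares the result ``of simple proof'' and refers to the earlier work of Alves, Delgado, Souto and Su\'arez), so there is no argument of the authors to compare against, and your direct verification from $(K_1)$ is exactly what is intended. Your treatment of $(\phi_4)$ is also correct and, in fact, more careful than the statement itself: for $0<\gamma\leq1$ the inequality $|a^{\gamma}-b^{\gamma}|\leq|a-b|^{\gamma}$ combined with $(\phi_3)$ gives global $\gamma$-H\"older continuity, whereas for $\gamma>1$ the map $t\mapsto|t|^{\gamma}$ is only locally Lipschitz, so $\phi$ is uniformly continuous (indeed Lipschitz) only on bounded subsets of $L^{\infty}(\Omega)$ and not globally --- one sees this by taking $w_n\equiv n$ and $v_n\equiv n+n^{1-\gamma}$, for which $|w_n-v_n|_{\infty}\rightarrow0$ while $||w_n|^{\gamma}-|v_n|^{\gamma}|_{\infty}$ stays bounded away from zero. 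Your remark that this restricted form is the only one used in the sequel is accurate, since every application of $(\phi_4)$ in the paper is to sequences of solutions that carry a uniform $L^{\infty}$ bound (Lemma 3.1). In short: the proposal is complete and correct, and the caveat on $(\phi_4)$ is a genuine precision of the statement rather than a gap in your argument.
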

Observe that, the above lemma was introduced in \cite{Alves-Delgado-Souto-Suarez}.

Now, as our arguments to prove Theorem \ref{T5} are based on the classical bifurcation result of Rabinowitz, see \cite{Rabinowitz}, we will recall it. The solution operator $S:C^1(\overline{\Omega})\rightarrow C^1(\overline{\Omega})$ given by
$$
S(v)=w_1\quad\Longleftrightarrow\quad\left\{
\begin{array}{lcl}
	-\Delta w_1=v,\mbox{ in }\Omega\\
	w_1=0,\mbox{ on }\partial\Omega
\end{array}
\right.
$$
is well defined, that is linear and compact operator, moreover verifies
\begin{eqnarray*}
	\|S(v)\|_{C^1(\overline{\Omega})}\leq c\|v\|_{C^1(\overline{\Omega})},\quad\forall v\in C^1(\overline{\Omega})
\end{eqnarray*}
for some $c\in\R$. Related to spectrum of $S$, it is easy to see that
\begin{equation*}
	\sigma(S)=\{\lambda_j^{-1}; \lambda_j \mbox{ is a eigenvalue of the minus Laplacian}\}.
\end{equation*}
On the other hand, define the nonlinear compact operator $G:C^1(\overline{\Omega})\rightarrow C^1(\overline{\Omega})$, given by
$$
G(v)=w_2\quad\Longleftrightarrow\quad\left\{
\begin{array}{lcl}
	-\Delta w_2+\phi_v(x)v+p|v|^{p-1}\vec{\alpha}(x)\cdot\nabla v=0,\mbox{ in }\Omega\\
	w_2=0,\mbox{ on }\partial\Omega
\end{array}
\right.
$$
that is continuous and satisfies
\begin{equation*}
	\|G(v)\|_{C^1(\overline{\Omega})}\leq c(|v|_{\infty}|\phi_{v}|_{\infty}+|v|_{\infty}^{p-1}|\nabla v|_{\infty}),
\end{equation*}
more yet
\begin{equation*}
	\left\|\frac{G(v)}{\|v\|_{C^1(\overline{\Omega})}} \right\|_{C^1(\overline{\Omega})}\leq c(|\phi_{v}|_{\infty}+|v|_{\infty}^{p-1}),\quad\forall v\in C^1(\overline{\Omega}),
\end{equation*}
consequently, if $p>1$, 
\begin{equation*}
	\lim_{v\rightarrow0}\frac{G(v)}{\|v\|_{C^1(\overline{\Omega})}}=0,
\end{equation*}
that is, $G(v)=o(\|v\|_{C^1(\overline{\Omega})})$.
Clearly, under these new notations: $(\lambda,u)$ solves $(P)_p$ if, and only if,
\begin{equation*}
	u=F(\lambda,u):=\lambda S(u)+G(u).
\end{equation*}
Now, as a direct consequence of \cite{Rabinowitz}, considering $E=C^1(\overline{\Omega})$ we have the following result
\begin{theorem}{(Global Bifurcation)}\label{global}
	Let $E$ be a Banach space. Suppose that $S$ is a compact linear operator
	and $\lambda^{-1}\in \sigma(S)$ has odd algebraic multiplicity . If $G$ is a compact operator and
	$$
	\lim_{\|u\|\to 0}\frac{G(u)}{\|u\|}=0,
	$$
	then  the set
	$$
	\Sigma=\overline{\{(\lambda,u)\in\R\times E:u=\lambda S(u)+G(u),u\neq0\}}
	$$
	has a closed connected component $\mathcal{C}=\mathcal{C}_{\lambda}$ such that $(\lambda,0)\in\mathcal{C}$ and
	
	(i) $\mathcal{C}$ is unbounded in $\R\times E$, 
	or
	
	(ii) there exists $\hat{\lambda}\neq \lambda$, such that $(\hat{\lambda},0)\in\mathcal{C}$ and $\hat{\lambda}^{-1}\in\sigma(S)$.
\end{theorem}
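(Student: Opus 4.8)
The plan is to prove this by Leray--Schauder degree theory, following the classical argument of Rabinowitz. Write $\Phi(\mu,u)=u-\mu S(u)-G(u)$, so that the nontrivial solutions of the fixed point problem are exactly the nontrivial zeros of $\Phi$; since $S$ and $G$ are compact, $\Phi(\mu,\cdot)$ is a compact perturbation of the identity, and $\deg(\Phi(\mu,\cdot),B_r,0)$ is defined whenever $\Phi(\mu,\cdot)$ has no zeros on $\partial B_r$. First I would establish the \emph{local index jump}. For $\mu$ in a punctured neighborhood of $\lambda$ with $\mu^{-1}\notin\sigma(S)$, the operator $I-\mu S$ is an isomorphism; using $G(u)=o(\|u\|)$ and the homotopy $tG$, one shows that for small $r$ the only zero of $\Phi(\mu,\cdot)$ in $\overline{B_r}$ is $0$ and
\[
i(\Phi(\mu,\cdot),0):=\deg(\Phi(\mu,\cdot),B_r,0)=\deg(I-\mu S,B_r,0)=(-1)^{\beta(\mu)},
\]
where $\beta(\mu)$ is the sum of the algebraic multiplicities of the real eigenvalues of $\mu S$ lying in $(1,\infty)$ (the Leray--Schauder index formula; complex eigenvalues occur in conjugate pairs and do not affect the parity). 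As $\mu$ increases through $\lambda$, exactly the eigenvalue $\mu\lambda^{-1}$ of $\mu S$ crosses the value $1$, so $\beta(\mu)$ changes by the algebraic multiplicity of $\lambda^{-1}$; since that multiplicity is odd,
\[
i(\Phi(\lambda-\delta,\cdot),0)=-\,i(\Phi(\lambda+\delta,\cdot),0)\neq0
\]
for all small $\delta>0$. In particular $(\lambda,0)$ is a bifurcation point and $(\lambda,0)\in\overline{\Sigma}$.

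Next I would run the global argument by contradiction. Let $\mathcal{C}$ be the connected component of $\overline{\Sigma}\cup\{(\lambda,0)\}$ containing $(\lambda,0)$, and suppose both alternatives fail, i.e. $\mathcal{C}$ is bounded \emph{and} contains no point $(\hat\lambda,0)$ with $\hat\lambda\neq\lambda$ and $\hat\lambda^{-1}\in\sigma(S)$. Because $\Phi$ is a compact perturbation of the identity, nontrivial solutions contained in a bounded region form a relatively compact set, so $\mathcal{C}$ is compact. By a separation lemma of Whyburn type---if in a compact metric space a component fails to meet a given closed set, the two can be separated by a relatively clopen set---one encloses $\mathcal{C}$ in a bounded open set $\mathcal{O}\subset\R\times E$ whose boundary $\partial\mathcal{O}$ contains no nontrivial zero of $\Phi$, and which meets the trivial axis $\R\times\{0\}$ only in a small slab $(\lambda-\delta,\lambda+\delta)\times\{0\}$ containing no reciprocal eigenvalue other than $\lambda^{-1}$.

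Finally I would derive the contradiction from degree invariance. For each $\mu$ put $\mathcal{O}_\mu=\{u:(\mu,u)\in\mathcal{O}\}$. Since $\partial\mathcal{O}$ is solution-free, $\Phi(\mu,\cdot)$ has no zeros on $\partial\mathcal{O}_\mu$, so $\mu\mapsto\deg(\Phi(\mu,\cdot),\mathcal{O}_\mu,0)$ is constant across $\lambda$ inside $\mathcal{O}$ by homotopy invariance. For $\mu=\lambda\pm\delta$ the only zero of $\Phi(\mu,\cdot)$ in $\overline{\mathcal{O}_\mu}$ is $u=0$, since every nontrivial solution lies on $\mathcal{C}$, which meets the axis only near $\lambda$; hence by excision $\deg(\Phi(\mu,\cdot),\mathcal{O}_\mu,0)=i(\Phi(\mu,\cdot),0)$. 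Combined with the index jump this forces
\[
i(\Phi(\lambda-\delta,\cdot),0)=i(\Phi(\lambda+\delta,\cdot),0),
\]
contradicting $i(\Phi(\lambda-\delta,\cdot),0)=-i(\Phi(\lambda+\delta,\cdot),0)\neq0$. Therefore at least one of the alternatives (i), (ii) must hold.

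I expect the main obstacle to be the topological separation step: converting the hypothesis ``$\mathcal{C}$ is bounded and returns to no other reciprocal eigenvalue'' into a genuine isolating neighborhood $\mathcal{O}$ with solution-free boundary and controlled intersection with the trivial axis. This is precisely where Rabinowitz's global argument departs from the purely local (Krasnoselskii) index computation, and it rests on the compactness of $\mathcal{C}$ together with Whyburn's lemma; by comparison the degree bookkeeping and the parity computation of $\beta(\mu)$ are routine.
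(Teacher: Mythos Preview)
Your outline is correct and follows the classical Rabinowitz argument via the Leray--Schauder index jump, Whyburn's separation lemma, and homotopy invariance of the degree. Note, however, that the paper does not actually prove this theorem: it is stated there as a direct consequence of \cite{Rabinowitz} and is used as a black box, so there is no ``paper's own proof'' to compare against---you have supplied precisely the argument the cited reference contains.
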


\section{Existence and nonexistence of Solutions for $(P)_p$}
The first eigenfunction $\varphi_1$ associated with $\lambda_1$ can be chosen positive. More yet, $\lambda_1^{-1}$ is an eigenvalue with odd multiplicity for $S$. From Theorem \ref{global}, there exists a closed connected component $\mathcal{C}=\mathcal{C}_{\lambda_1}$ of solutions for
$(P)_p$, which verifies $(i)$ or $(ii)$. The next two results follow the ideas from \cite{Alves-Delgado-Souto-Suarez}, particularly Lemma 5 and 6, we leave them in detail to make the text clear to read.

\begin{lemma}
	There exists $\delta>0$ such that if $(\lambda,u)\in\mathcal{C}$ with $|\lambda-\lambda_1|+\|u\|_{C^1(\overline{\Omega})}<\delta$ and $u\neq0$, then $u$ has defined signal, i.e.,
	\begin{equation*}
		u(x)>0\quad\forall x\in\Omega\quad\mbox{ or }\quad u(x)<0\quad\forall x\in\Omega.
	\end{equation*}
\end{lemma}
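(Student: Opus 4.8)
The plan is to argue by contradiction, using that near the bifurcation point $(\lambda_1,0)$ the small nontrivial solutions on $\mathcal{C}$ are, after rescaling, $C^1$-close to $\pm\varphi_1$, and that $\varphi_1$ is strictly positive in $\Omega$ with $\partial_\eta\varphi_1<0$ on $\partial\Omega$; hence a $C^1$-small perturbation of it keeps a definite sign. So suppose the statement is false: there exist $(\lambda_n,u_n)\in\mathcal{C}$ with $u_n\neq0$, $\lambda_n\to\lambda_1$, $\|u_n\|_{C^1(\overline{\Omega})}\to0$, and each $u_n$ changing sign in $\Omega$.

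Set $v_n:=u_n/\|u_n\|_{C^1(\overline{\Omega})}$, so $\|v_n\|_{C^1(\overline{\Omega})}=1$. Dividing the identity $u_n=\lambda_n S(u_n)+G(u_n)$ by $\|u_n\|_{C^1(\overline{\Omega})}$ yields
$$v_n=\lambda_n S(v_n)+\frac{G(u_n)}{\|u_n\|_{C^1(\overline{\Omega})}},$$
and since $G(u)=o(\|u\|_{C^1(\overline{\Omega})})$ the last term tends to $0$ in $C^1(\overline{\Omega})$, while compactness of $S$ makes $(S(v_n))$ precompact. Hence, along a subsequence, $v_n\to v$ in $C^1(\overline{\Omega})$ with $\|v\|_{C^1(\overline{\Omega})}=1$ and $v=\lambda_1 S(v)$, i.e. $-\Delta v=\lambda_1 v$ in $\Omega$, $v=0$ on $\partial\Omega$. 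By simplicity of $\lambda_1$, $v=\pm\varphi_1/\|\varphi_1\|_{C^1(\overline{\Omega})}$; I treat the $+$ sign (the other being symmetric), so $v>0$ in $\Omega$ and, by Hopf's boundary lemma, $\partial_\eta v<0$ on $\partial\Omega$.

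Next I would transfer this sign to $v_n$ for $n$ large. On the compact set $\overline{\Omega\setminus\Omega_\delta}\subset\Omega$ one has $v\geq m>0$, so $v_n>0$ there as soon as $\|v_n-v\|_{C^0(\overline{\Omega})}<m$. On the boundary strip $\Omega_\delta$, arguing as in the existence lemma (Hopf plus continuity, shrinking $\delta$), there are $\delta,\rho>0$ with $\nabla v(x)\cdot\eta(x')\leq-\rho$ whenever $x\in\Omega_\delta$ has nearest boundary point $x'$; integrating $\nabla v_n$ inward from $\partial\Omega$ — where both $v$ and $v_n$ vanish (the latter because $u_n=\lambda_n S(u_n)+G(u_n)$ is $0$ on $\partial\Omega$) — gives $v_n(x)\geq\tfrac{\rho}{2}\,d(x,\partial\Omega)>0$ on $\Omega_\delta$ provided $\|\nabla v_n-\nabla v\|_{C^0(\overline{\Omega})}<\rho/2$. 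Combining the two regions yields $v_n>0$ in $\Omega$, hence $u_n>0$ in $\Omega$, contradicting that $u_n$ changes sign; this proves the lemma.

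The step I expect to be the \emph{main obstacle} is the last one: uniform ($C^0$) convergence alone does not preserve the sign where $v$ degenerates, namely in a neighborhood of $\partial\Omega$, so one must genuinely use the $C^1$-convergence together with the quantitative Hopf bound $|\nabla\varphi_1|\geq\rho>0$ on $\Omega_\delta$ (already isolated in the existence proof) to obtain the comparison $v_n\gtrsim d(\cdot,\partial\Omega)$ there. The remaining ingredients — the normalization, compactness of $S$, the fact $G(u)=o(\|u\|_{C^1(\overline{\Omega})})$, and simplicity of $\lambda_1$ — are standard.
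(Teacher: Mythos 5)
Your proposal is correct and follows essentially the same route as the paper: rescale $w_n=u_n/\|u_n\|_{C^1(\overline{\Omega})}$, pass to the limit using compactness of $S$ and $G(u)=o(\|u\|_{C^1(\overline{\Omega})})$ to identify the limit with $\pm\varphi_1$ (up to normalization), then transfer the sign back to $u_n$. The only difference is that the paper simply asserts the sign transfer from the $C^1$-limit, whereas you supply the interior/boundary-strip argument that genuinely justifies it — a worthwhile addition, since $C^0$-closeness alone would not suffice near $\partial\Omega$.
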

\begin{proof}
	Let $(u_n)\subset C^1(\overline{\Omega})$ and $\lambda_n\rightarrow \lambda_1$ such that
	\begin{equation*}
		u_n\neq0,\quad \|u_n\|_{C^1(\overline{\Omega})}\rightarrow0\quad\mbox{and}\quad u_n=F(\lambda_n,u_n).
	\end{equation*}
	Consider $w_n=u_n/\|u_n\|_{C^1(\overline{\Omega})}$, by Arzelá-Áscoli Theorem, we have $w_n\rightarrow w$ in $C(\overline{\Omega})$ for some convenient subsequence. Since $S$ can be see as continuous operator from $C(\overline{\Omega})$ to $C^1(\overline{\Omega})$, we get $S(w_n)\rightarrow S(w)$ in $C^1(\overline{\Omega})$. Now, see that
	\begin{equation*}
		u_n=F(\lambda_n,u_n)=\lambda_n S(u_n)+G(u_n)
	\end{equation*}
	and so
	\begin{equation*}
		w_n=\lambda_nS(w_n)+o_n(1),
	\end{equation*}
	consequently $w=\lambda_1S(w)$, that is
	\begin{equation*}
		\left\{
		\begin{array}{lcl}
			-\Delta w=\lambda_1 w,\mbox{ in }\Omega\\
			w=0,\mbox{ on }\partial\Omega.
		\end{array}
		\right.
	\end{equation*}
	Hence, $w\in C^1(\overline{\Omega})$ and $\|w\|_{C^1(\overline{\Omega})}=1$, that is $w\neq0$, by Spectral Theory, we have that
	\begin{equation*}
		w(x)>0\quad\forall x\in\Omega\quad\mbox{ or }\quad w(x)<0\quad\forall x\in\Omega.
	\end{equation*}
	Without loss of generality, we can suppose that $w(x)>0$ for all $x\in\Omega$. Since $w$ is the $C^1(\overline{\Omega})$-limit of $(w_n)$,
	we get $w_n(x)>0$ for all $x\in\Omega$ for $n$ large enough. Therefore, the sign of $u_n$ is the same of $w_n$ for
	$n$ large enough. Concluding the proof.
\end{proof}

It is easy to check that if $(\lambda,u)\in\Sigma$, the pair $(\lambda,-u)$ also is in $\Sigma$. In what follows, we decompose $\mathcal{C}$ into $\mathcal{C}=\mathcal{C}^+\cup \mathcal{C}^-$ where
\begin{equation*}
	\mathcal{C}^+:=\{(\lambda,u)\in\mathcal{C}; u(x)>0,\forall x\in\Omega\}\cup\{(\lambda_1,0)\}
\end{equation*}
and 
\begin{equation*}
	\mathcal{C}^-:=\{(\lambda,u)\in\mathcal{C}; u(x)<0,\forall x\in\Omega\}\cup\{(\lambda_1,0)\}.
\end{equation*}
A simple computation gives that $\mathcal{C}^-=\{(\lambda,u)\in\mathcal{C};(\lambda,-u)\in \mathcal{C}^+ \}$, $\mathcal{C}^+\cap \mathcal{C}^-=\{(\lambda_1,0)\}$ and $\mathcal{C}^+$ is unbounded if, and only if, $\mathcal{C}^-$ is also unbounded.

\begin{lemma}\label{ilim}
	$\mathcal{C}^+$ is unbounded.
\end{lemma}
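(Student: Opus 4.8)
The plan is to argue by contradiction, using the Global Bifurcation Theorem together with the structural facts already recorded for $\mathcal{C}=\mathcal{C}^+\cup\mathcal{C}^-$. Suppose $\mathcal{C}^+$ is bounded in $\R\times C^1(\overline{\Omega})$. Since $\mathcal{C}^+$ is unbounded if and only if $\mathcal{C}^-$ is, and $\mathcal{C}=\mathcal{C}^+\cup\mathcal{C}^-$, the whole continuum $\mathcal{C}$ is then bounded. Hence alternative $(i)$ of the Global Bifurcation Theorem cannot hold, so alternative $(ii)$ does: there exists $\hat\lambda\neq\lambda_1$ with $\hat\lambda^{-1}\in\sigma(S)$ and $(\hat\lambda,0)\in\mathcal{C}$. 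Being a bifurcation point, $(\hat\lambda,0)$ is the limit of a sequence $(\lambda_n,u_n)\in\mathcal{C}$ with $u_n\neq0$, $\lambda_n\to\hat\lambda$ and $\|u_n\|_{C^1(\overline{\Omega})}\to0$; and since every nontrivial element of $\mathcal{C}=\mathcal{C}^+\cup\mathcal{C}^-$ has a definite sign, after passing to a subsequence I may assume $u_n>0$ in $\Omega$ for all $n$ (if not, replace $u_n$ by $-u_n$, which by the symmetry $(\lambda,u)\mapsto(\lambda,-u)$ again lies on $\mathcal{C}$).

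The heart of the argument is then the usual normalization. Put $w_n=u_n/\|u_n\|_{C^1(\overline{\Omega})}$, so that $\|w_n\|_{C^1(\overline{\Omega})}=1$ and $w_n\geq0$. From $u_n=F(\lambda_n,u_n)=\lambda_n S(u_n)+G(u_n)$, the linearity of $S$, and the estimate $G(u)=o(\|u\|_{C^1(\overline{\Omega})})$ one gets $w_n=\lambda_n S(w_n)+o_n(1)$. Since $S$ is linear and compact and $(\lambda_n)$ is bounded, a subsequence of $(w_n)$ converges in $C^1(\overline{\Omega})$ to some $w$ with $\|w\|_{C^1(\overline{\Omega})}=1$, hence $w\neq0$; moreover $w\geq0$ and $w=\hat\lambda S(w)$, i.e.
\begin{equation*}
-\Delta w=\hat\lambda w\ \mbox{ in }\Omega,\qquad w=0\ \mbox{ on }\partial\Omega.
\end{equation*}
Since $w\geq0$, $w\not\equiv0$ solves this linear eigenvalue problem and $\hat\lambda>0$, the Strong Maximum Principle yields $w>0$ in $\Omega$. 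But a Dirichlet eigenfunction of $-\Delta$ that is strictly positive in $\Omega$ can only be associated with the principal eigenvalue, so $\hat\lambda=\lambda_1$, contradicting $\hat\lambda\neq\lambda_1$. Therefore $\mathcal{C}^+$ is unbounded.

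I expect the only genuinely delicate point to be the exclusion of alternative $(ii)$: it is precisely the positivity carried along $\mathcal{C}^+$ — which survives the normalization as $w\geq0$ — combined with the Strong Maximum Principle and the variational characterization of $\lambda_1$ that forces any secondary bifurcation point of positive solutions to coincide with $\lambda_1$. The passage to the limit in $w_n=\lambda_n S(w_n)+o_n(1)$, and the identification $w=\hat\lambda S(w)$, are routine once $G(u)=o(\|u\|_{C^1(\overline{\Omega})})$ and the compactness of $S$ are in hand.
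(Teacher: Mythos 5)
Your proposal is correct and follows essentially the same route as the paper: assume $\mathcal{C}^+$ is bounded, invoke alternative $(ii)$ of the Global Bifurcation Theorem to produce a secondary bifurcation point $(\hat\lambda,0)$ with $\hat\lambda\neq\lambda_1$, then normalize and pass to the limit to obtain a nontrivial Dirichlet eigenfunction $w$ for $\hat\lambda$. The only cosmetic difference is where the contradiction lands: you use $w\geq0$ to force $\hat\lambda=\lambda_1$ via the strong maximum principle, while the paper notes that $w$ (hence $u_n$ for large $n$) must change sign, contradicting the sign-definiteness of nontrivial elements of $\mathcal{C}$ — two equivalent formulations of the same fact.
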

\begin{proof}
	If $\mathcal{C}^+$ is bounded, we have that $\mathcal{C}$ is bounded too. And so, from Global Bifurcation Theorem, $\mathcal{C}$ contains $(\hat{\lambda},0)$, where $\hat{\lambda}\neq \lambda_1$ and $\hat{\lambda}^{-1}\in\sigma(S)$. As a consequence, we can take $(u_n)\subset C^1(\overline{\Omega})$ and $\lambda_n\rightarrow\hat{\lambda}$ such that 
	\begin{equation*}
		u_n\neq0,\quad \|u_n\|_{C^1(\overline{\Omega})}\rightarrow0\quad\mbox{and}\quad u_n=F(\lambda_n,u_n).
	\end{equation*}
	Considering $w_n=u_n/\|u_n\|_{C^1(\overline{\Omega})}$, we have, repeating previous arguments, $w_n\rightarrow w$ in $C^1(\overline{\Omega})$ with $w\neq0$, moreover
	\begin{equation*}
		\left\{
		\begin{array}{lcl}
			-\Delta w=\hat{\lambda} w,\mbox{ in }\Omega\\
			w=0,\mbox{ on }\partial\Omega.
		\end{array}
		\right.
	\end{equation*}
	showing that $w$ is a eigenfunction related to $\hat{\lambda}$. Since $\hat{\lambda}\neq\lambda_1$, $w$ must change sign. Then, for $n$ large, each $w_n$ must change sign, and the same should be hold for $u_n$. But this is a contradiction, because $(\lambda_n,u_n)\in \mathcal{C}^+$ or $(\lambda_n,u_n)\in \mathcal{C}^-$.
\end{proof}

\textbf{A Priori Estimate:}
From Lemma \ref{ilim}, the connected component $\mathcal{C}^+$ is unbounded. Now, our goal is to show that this component intersects any set of the form $\{\lambda\}\times C^1(\overline{\Omega})$, for $\lambda>\lambda_1$.

\begin{lemma}\label{lem01}
	Suppose that $div \vec{\alpha}(x)=0$ in $\Omega$. For any $\Lambda>0$, there exists $M_{\Lambda}>0$ (that is independent of $\vec{\alpha}$) such that if $(\lambda,u)\in \mathcal{C}^+$ and $\lambda\leq \Lambda$, we get $\|u\|_{C^1(\overline{\Omega})}\leq M_{\Lambda}$.
\end{lemma}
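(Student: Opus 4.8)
The goal is a uniform $C^1(\overline{\Omega})$ a priori bound for the positive branch $\mathcal{C}^+$ on any compact $\lambda$-interval $[\lambda_1,\Lambda]$. The natural route mimics the argument of Lemma~\ref{L1}, but now the advection term is $p|v|^{p-1}\vec{\alpha}(x)\cdot\nabla v$ with $\vec{\alpha}\in C^1(\overline{\Omega})^N$, so the key structural fact used in \eqref{EQI} — that $\int_\Omega \vec{\alpha}\cdot\nabla u^{p+1}\,dx=0$ — still survives precisely because $\mathrm{div}\,\vec{\alpha}=0$ and $u=0$ on $\partial\Omega$. I would proceed in three stages: first an $H_0^1$-bound, then a bootstrap to $L^\infty$, then elliptic regularity up to $C^1(\overline{\Omega})$.

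\textbf{Step 1 ($H_0^1$-bound).} Suppose not: take $(\lambda_n,u_n)\in\mathcal{C}^+$ with $\lambda_n\in[\lambda_1,\Lambda]$, $u_n>0$, and $\|u_n\|\to\infty$. Set $w_n=u_n/\|u_n\|$. Testing the weak formulation of $(P)_p$ with $u_n/\|u_n\|^{\gamma+1}$ and using that
\[
\int_\Omega u_n^{p-1}w_n(\vec{\alpha}\cdot\nabla u_n)\,dx=\frac{1}{(p+1)\|u_n\|}\int_\Omega \vec{\alpha}\cdot\nabla u_n^{p+1}\,dx=-\frac{1}{(p+1)\|u_n\|}\int_\Omega u_n^{p+1}\,\mathrm{div}\,\vec{\alpha}\,dx=0
\]
(the boundary term vanishes since $u_n=0$ on $\partial\Omega$, the divergence integral since $\mathrm{div}\,\vec{\alpha}=0$), one gets $\int_\Omega\phi_{w_n}(x)w_n^2\,dx\to0$, hence by Fatou and $(K_2)$ that $w_n\to0$ in $L^2(\Omega)$. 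Then testing with $w_n$ itself and invoking the same cancellation of the advection integral yields $\int_\Omega|\nabla w_n|^2\,dx\le\Lambda\int_\Omega w_n^2\,dx\to0$, contradicting $\|w_n\|=1$. This is the same mechanism as in Lemma~\ref{L1}; the divergence-free hypothesis is exactly what makes it go through for a variable flow.

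\textbf{Step 2 ($L^\infty$- and $C^1$-bounds).} With $\|u\|\le C_1$ uniformly, rewrite $-\Delta u=\lambda u-u\phi_u(x)-p u^{p-1}\vec{\alpha}(x)\cdot\nabla u$. Here the standard Moser/Stampacchia iteration is complicated by the gradient nonlinearity, so instead I would integrate by parts to move the derivative off $u$: for a test function one has $\int_\Omega p u^{p-1}\varphi(\vec{\alpha}\cdot\nabla u)\,dx=\int_\Omega(\vec{\alpha}\cdot\nabla u^p)\varphi\,dx=-\int_\Omega u^p(\vec{\alpha}\cdot\nabla\varphi)\,dx-\int_\Omega u^p\varphi\,\mathrm{div}\,\vec{\alpha}\,dx=-\int_\Omega u^p(\vec{\alpha}\cdot\nabla\varphi)\,dx$, again using $\mathrm{div}\,\vec{\alpha}=0$. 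Thus $u$ is a weak solution of $-\Delta u-\vec{\alpha}\cdot\nabla(u^p)/1=\dots$ in divergence form with no derivatives of $u$ in the data; bounding $\|u^p\|_{L^q}$ by $\|u\|^{p}$ via Sobolev embedding and bootstrapping with $L^q$-regularity gives $\|u\|_{W^{2,q}}\le C_2$ for any $q<\infty$, hence $\|u\|_{C^{1,\sigma}(\overline{\Omega})}\le C_3$. Setting $r=C_3$ finishes the lemma.

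\textbf{Main obstacle.} The delicate point is Step 2: because the advection term carries $\nabla u$, one cannot immediately close a regularity estimate, and one must exploit the conservative (divergence-form) structure $\vec{\alpha}\cdot\nabla(u^p)=\mathrm{div}(u^p\vec{\alpha})$ — valid only because $\mathrm{div}\,\vec{\alpha}=0$ — to transfer the derivative onto test functions and run a clean bootstrap. (One also needs $u^p\in L^q$ to be controlled by the $H_0^1$-bound, which on a bounded domain follows from Sobolev embedding once $u$ is known to be in $L^\infty$; a short intermediate $L^\infty$ step via the De Giorgi–Nash–Moser estimate for the divergence-form operator $-\Delta-\mathrm{div}(\,\cdot\,\vec{\alpha})$ with bounded drift handles this.) Everything else is a routine repetition of the $p\ge 1$ arguments already used in Section~3.
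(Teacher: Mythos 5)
Your argument reaches the lemma by a genuinely different route from the paper's. The paper works entirely in $C^1(\overline{\Omega})$: it sets $w_n=u_n/\|u_n\|_{C^1(\overline{\Omega})}$, asserts via compactness of the solution operator that $w_n\rightarrow w$ in $C^1(\overline{\Omega})$ with $w\neq0$, and then tests with $u_n/\|u_n\|_{C^1(\overline{\Omega})}^{p+1}$ so that the hypothesis $\mathrm{div}\,\vec{\alpha}=0$ annihilates the advection integral; Fatou's Lemma and $(K_2)$ then force $w\equiv0$, contradicting $w\neq0$. You instead normalize in $H_0^1(\Omega)$ exactly as in Lemma \ref{L1}: the cancellation (\ref{EQI}) survives for variable $\vec{\alpha}$ precisely because $\mathrm{div}\,\vec{\alpha}=0$ and $u_n=0$ on $\partial\Omega$, so $\int_{\Omega}\phi_{w_n}(x)w_n^2\,dx\rightarrow0$, $(K_2)$ gives $w=0$, and testing with $w_n$ gives $w_n\rightarrow0$ in $H_0^1(\Omega)$, contradicting $\|w_n\|=1$; you then bootstrap the uniform $H_0^1$ bound up to $C^1(\overline{\Omega})$. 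Your Step 1 is the more robust half of the comparison: it does not need the nonvanishing $C^1$-limit $w\neq0$, a point the paper justifies only by appeal to ``previous arguments'' that were proved in the regime $\|u_n\|_{C^1(\overline{\Omega})}\rightarrow0$, where $G(u)=o(\|u\|_{C^1(\overline{\Omega})})$. What the paper's route buys is that no regularity bootstrap is needed, since the quantity being bounded is the $C^1$ norm from the outset.

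The one step of yours that does not close as written is the intermediate $L^{\infty}$ estimate in Step 2. You propose to treat $-\Delta u-\mathrm{div}(u^{p}\vec{\alpha})=\cdots$ as a divergence-form operator ``with bounded drift,'' but the effective drift is $b(x)=u^{p-1}(x)\vec{\alpha}(x)$, which is bounded only after $u\in L^{\infty}(\Omega)$ is known --- exactly what is being proved --- and the $H_0^1$ bound alone does not place $u^{p-1}$ in $L^{N}(\Omega)$ when $p$ is large. The circularity is repairable by the same conservative structure you already exploit: testing with $(u-k)^{+}$ for $k>0$ and noting that
\begin{equation*}
\int_{\Omega}(u-k)^{+}\,\vec{\alpha}\cdot\nabla u^{p}\,dx=-\frac{1}{p+1}\int_{\Omega}\vec{\alpha}\cdot\nabla\left[\left(u^{p+1}-k^{p+1}\right)^{+}\right]dx=0,
\end{equation*}
since $\mathrm{div}\,\vec{\alpha}=0$ and $(u^{p+1}-k^{p+1})^{+}$ vanishes on $\partial\Omega$, one obtains $\int_{\Omega}|\nabla(u-k)^{+}|^{2}dx\leq\Lambda\int_{\{u>k\}}u(u-k)^{+}dx$, and the standard Stampacchia iteration yields the uniform $L^{\infty}$ bound; the $W^{2,q}$ and $C^{1,\sigma}$ bounds then follow as you indicate. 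With that substitution your proof is complete.
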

\begin{proof}
	Indeed, arguing by contradiction, if it is not true, there are $(u_n)\subset C^1(\overline{\Omega})$ and $(\lambda_n)\subset[0,\Lambda]$ such that 
	\begin{equation*}
		\|u_n\|_{C^1(\overline{\Omega})}\rightarrow\infty\quad\mbox{and}\quad u_n=F(\lambda_n,u_n).
	\end{equation*}
	Considering $w_n=u_n/\|u_n\|_{C^1(\overline{\Omega})}$, it follows that
	\begin{equation*}
		\int_{\Omega}\nabla w_n\nabla v dx+\int_{\Omega}\phi_{u_n}(x)w_n vdx+p\int_{\Omega}v|u_n|^{p-1}[\vec{\alpha}(x)\cdot\nabla w_n] dx=\lambda_n\int_{\Omega} w_nvdx,\quad\forall v\in C_0^1(\overline{\Omega}).
	\end{equation*}
	Observe that, from same previous arguments, $w_n\rightarrow w$ in $C^1(\overline{\Omega})$ with $w\neq0$. And so, taking $v=\frac{u_n}{\|u_n\|^{\gamma+1}_{C^1(\overline{\Omega})}}$ as a test function, and recalling that $t^\gamma\phi_{u_n}=\phi_{tu_n}$ for all $t>0$, we get
	\begin{equation*}
		\frac{1}{\|u_n\|^\gamma_{C^1(\overline{\Omega})}}\int_{\Omega}|\nabla w_n|^2dx+\int_{\Omega}\phi_{w_n}(x)w_n^2dx+   \frac{p}{\|u_n\|^{\gamma-p+1}_{C^1(\overline{\Omega})}}\int_{\Omega}w_n^p[\vec{\alpha}(x)\cdot\nabla w_n] dx=\frac{\lambda_n}{\|u_n\|^\gamma_{C^1(\overline{\Omega})}}\int_{\Omega}w^2_ndx,\quad\forall n\in\N.
	\end{equation*}
	Now, as $\|u_n\|_{C^1(\overline{\Omega})}\rightarrow\infty$, $(w_n)$ is bounded in $C^1(\overline{\Omega})$ and 
	\begin{equation*}
		p\int_{\Omega}w_n^p[\vec{\alpha}(x)\cdot\nabla w_n] dx=\frac{p}{p+1}\int_{\Omega}\vec{\alpha}(x)\cdot \nabla w_n^{p+1}dx=-\frac{p}{p+1}\int_{\Omega}w_n^{p+1}div(\vec{\alpha}(x))dx+\frac{p}{p+1}\int_{\partial\Omega}w_n^{p+1}(\vec{\alpha}\cdot\eta) d\sigma=0,
	\end{equation*}
	because $div \vec{\alpha}(x)=0$ and $w_n=0$ in $\partial\Omega$. So
	\begin{equation*}
		\lim_{n\rightarrow\infty}\int_{\Omega}\phi_{w_n}(x)w_n^2dx=0.
	\end{equation*}
	From Fatou's Lemma, 
	\begin{equation*}
		0\leq\int_{\Omega}\phi_{w}(x)w^2dx\leq \lim_{n\rightarrow\infty}\int_{\Omega}\phi_{w_n}(x)w_n^2dx\leq0
	\end{equation*}
	and so
	\begin{equation*}
		\int_{\Omega}\phi_{w}(x)w^2dx=0
	\end{equation*}
	that is
	\begin{equation*}
		\int_{\Omega\times\Omega}K(x,y)|w(y)|^{\gamma}|w(x)|^{2}dxdy=0,
	\end{equation*}
	consequently, by $(K_2)$, see also Remark \ref{remar1}, we obtain $w\equiv0$. That is a contradiction. Therefore, the proof is done.
\end{proof}

\begin{lemma}
Suppose that $div \vec{\alpha}(x)=0$ in $\Omega$. The problem $(P)_p$ does not admit a positive solution if $\lambda\leq\lambda_1$.
\end{lemma}
\begin{proof}
	Initially, observe that if $(P)_p$ admits a positive solution for $\lambda\leq\lambda_1$, that is $u>0$ in $\Omega$, we obtain
	\begin{equation*}
		\int_{\Omega}[|\nabla u|^2+pu^p(\vec{\alpha}(x)\cdot\nabla u)]dx=\int_{\Omega}(\lambda u^2-u^2\phi_u(x))dx<\int_{\Omega}\lambda u^2dx
	\end{equation*}
Note that, as $div \vec{\alpha}(x)=0$ and $u=0$ in $\partial\Omega$, we get
\begin{equation}\label{EQI}
	p\int_{\Omega}u^p(\vec{\alpha}(x)\cdot\nabla u) dx=\frac{p}{p+1}\int_{\Omega}\vec{\alpha}(x)\cdot \nabla u_n^{p+1}dx=-\frac{p}{p+1}\int_{\Omega}u_n^{p+1}div(\vec{\alpha}(x))dx+\frac{p}{p+1}\int_{\partial\Omega}u_n^{p+1}(\vec{\alpha}\cdot\eta) d\sigma=0,
\end{equation}
	and so, we have
	\begin{equation*}
		\int_{\Omega}|\nabla u|^2dx<\int_{\Omega}\lambda u^2dx\quad\Rightarrow\quad \lambda>\lambda_1
	\end{equation*}
	that is naturally an absurd. Concluding the proof.
\end{proof}

By previous lemmas, we know the existence and nonexistence of the solution of $(P)_p$, when $p>1$. And so, as a product of previous lemma, we have the proof of Theorem \ref{T5}. 

To finish this section, we observe that, for $p=1$ and $\vec{\alpha}(x)\equiv\vec{\alpha}\in\R^N$ in $(P)_p$, the solution operator $S_0:C^1(\overline{\Omega})\rightarrow C^1(\overline{\Omega})$ given by
$$
S_0(v)=w_1\quad\Longleftrightarrow\quad\left\{
\begin{array}{lcl}
	-\Delta w_1+\vec{\alpha}\cdot w_1=v,\mbox{ in }\Omega\\
	w_1=0,\mbox{ on }\partial\Omega
\end{array}
\right.
$$
is well defined, that is linear and compact operator, moreover verifies
\begin{eqnarray*}
	\|S_0(v)\|_{C^1(\overline{\Omega})}\leq c\|v\|_{C^1(\overline{\Omega})},\quad\forall v\in C^1(\overline{\Omega})
\end{eqnarray*}
for some $c\in\R$. Related to spectrum of $S_0$, it is easy to see that
\begin{equation*}
	\sigma(S_0)=\{(\lambda_j[L_\alpha])^{-1}; \lambda_j[L_\alpha] \mbox{ is a eigenvalue of the $L_\alpha$}\}.
\end{equation*}
On the other hand, define the nonlinear compact operator $G_0:C^1(\overline{\Omega})\rightarrow C^1(\overline{\Omega})$, given by
$$
G_0(v)=w_2\quad\Longleftrightarrow\quad\left\{
\begin{array}{lcl}
	-\Delta w_2+\phi_v(x)v=0,\mbox{ in }\Omega\\
	w_2=0,\mbox{ on }\partial\Omega
\end{array}
\right.
$$
that is continuous and satisfies
\begin{equation*}
	\|G_0(v)\|_{C^1(\overline{\Omega})}\leq c|v|_{\infty}|\phi_{v}|_{\infty},
\end{equation*}
more yet
\begin{equation*}
	\left\|\frac{G_0(v)}{\|v\|_{C^1(\overline{\Omega})}} \right\|_{C^1(\overline{\Omega})}\leq c|\phi_{v}|_{\infty},\quad\forall v\in C^1(\overline{\Omega}),
\end{equation*}
consequently
\begin{equation*}
	\lim_{v\rightarrow0}\frac{G_0(v)}{\|v\|_{C^1(\overline{\Omega})}}=0,
\end{equation*}
that is, $G_0(v)=o(\|v\|_{C^1(\overline{\Omega})})$.
Clearly, under these new notations: $(\lambda,u)$ solves $(P)_1$ if, and only if,
\begin{equation*}
	u=F_0(\lambda,u):=\lambda S_0(u)+G_0(u).
\end{equation*}
Using the similar arguments, lemmas and observations, change $S$ and $G$ by $S_0$ and $G_0$, respectively, to prove the Theorem \ref{T5}, we get the Corollary \ref{coro1}.

\section{Behavior with respect to $\vec{\alpha}$ and $p$}

In this section we analyze the behavior of the positive solutions with respect to $\vec{\alpha}\in\R^N$ and $p>1$. That is, we will prove the Theorems \ref{T3} and \ref{T4}.

First, we will establish  behavior of the positive solutions for $p>1$ with $|\vec{\alpha}|\rightarrow0$ and $|\vec{\alpha}|\rightarrow+\infty$, that is the proof of the Theorem \ref{T3}.

\begin{proof}[Proof of Theorem \ref{T3}]
Firstly, we will to prove (\ref{eq1}). If $|\vec{\alpha}|\rightarrow\infty$ then there exists $\alpha_i\rightarrow\pm\infty$ and denote by $u_{\lambda,\vec{\alpha}}$ a positive solution of $(P)_p$. Suppose, for instance, $\alpha_i\rightarrow\infty$ for some fixed $i$. Let $R>0$ be a constant such that $R-x_i>0$ in $\overline{\Omega}$. Define $\xi(x)=R-x_i>0$. Multiplying $(P)_p$ by $\xi$ and integrating the resulting equation over $\Omega$, we get
\begin{eqnarray*}
-\int_{\partial\Omega}\frac{\partial u_{\lambda,\vec{\alpha}}}{\partial\eta}\xi d\sigma+\int_{\Omega}\nabla u_{\lambda,\vec{\alpha}}\cdot\nabla\xi dx+\int_{\Omega}\xi(\vec{\alpha}\cdot\nabla u_{\lambda,\vec{\alpha}}^p)dx&=&\int_{\Omega}(\lambda u_{\lambda,\vec{\alpha}}\xi-\phi_{u_{\lambda,\vec{\alpha}}}(x)u_{\lambda,\vec{\alpha}}\xi )dx\Longleftrightarrow\\
-\int_{\partial\Omega}\frac{\partial u_{\lambda,\vec{\alpha}}}{\partial\eta}\xi d\sigma-\int_{\Omega}u_{\lambda,\vec{\alpha}}\Delta\xi dx-\int_{\Omega}u_{\lambda,\vec{\alpha}}^p(\vec{\alpha}\cdot\nabla\xi) dx&=&\int_{\Omega}(\lambda u_{\lambda,\vec{\alpha}}\xi-\phi_{u_{\lambda,\vec{\alpha}}}(x)u_{\lambda,\vec{\alpha}}\xi )dx\Longleftrightarrow\\
-\int_{\partial\Omega}\frac{\partial u_{\lambda,\vec{\alpha}}}{\partial\eta}\xi d\sigma&=&\int_{\Omega}(\lambda u_{\lambda,\vec{\alpha}}\xi-\phi_{u_{\lambda,\vec{\alpha}}}(x)u_{\lambda,\vec{\alpha}}\xi )dx-\int_{\Omega}\alpha_iu_{\lambda,\vec{\alpha}}^pdx,
\end{eqnarray*}
where $\eta$ stands for the outward unit normal vector around $\partial\Omega$. Since $\frac{\partial u_{\lambda,\vec{\alpha}}}{\partial\eta}\leq0$, $\lambda>0$ and $\xi>0$, we get
\begin{equation}
0\leq \int_{\Omega}(\lambda u_{\lambda,\vec{\alpha}}|\xi|_{\infty}-\alpha_iu_{\lambda,\vec{\alpha}}^p)dx.
\end{equation}
Since $p>1$, for each $\lambda>\lambda_1$, we obtain
\begin{equation*}
\alpha_i\int_{\Omega}u_{\lambda,\vec{\alpha}}^pdx\leq \lambda|\xi|_{\infty}\int_{\Omega}u_{\lambda,\vec{\alpha}}dx\leq \lambda|\xi|_{\infty}M_{\Lambda}|\Omega|
\end{equation*}
where $M_\Lambda$ is the positive constant that appears in Lemma \ref{lem01}. Since $\alpha_i\rightarrow+\infty$, we conclude that
\begin{equation*}
\int_{\Omega}u_{\lambda,\vec{\alpha}}^pdx\rightarrow0.
\end{equation*}
More yet, as $|u_{\lambda,\vec{\alpha}}|_{\infty}\leq M_{\Lambda}$, we obtain that
\begin{equation}\label{k2}
\int_{\Omega}u_{\lambda,\vec{\alpha}}^tdx\rightarrow0,\quad\forall t\in[1,+\infty).
\end{equation}
Now, for $u_{\lambda,\vec{\alpha}}$ as a test function, we have
\begin{equation}\label{k1}
\int_{\Omega}\left|\nabla u_{\lambda,\vec{\alpha}}\right|^2dx=\int_{\Omega}\nabla u_{\lambda,\vec{\alpha}}\cdot\nabla u_{\lambda,\vec{\alpha}}dx+p\int_{\Omega}u_{\lambda,\vec{\alpha}}^p(\vec{\alpha}\cdot\nabla u_{\lambda,\vec{\alpha}})dx=\int_{\Omega}(\lambda u_{\lambda,\vec{\alpha}}^2-\phi_{u_{\lambda,\vec{\alpha}}}(x)u_{\lambda,\vec{\alpha}}^2)dx\leq \lambda\int_{\Omega}u_{\lambda,\vec{\alpha}}^2dx,
\end{equation}
and so, it is easy to see that $u_{\lambda,\vec{\alpha}}\rightarrow0$ in $H_0^1(\Omega)$.

In this point, we observe that the analogous steps of \cite[Theorem 1.2]{CintraMontenegroSuarez} can be followed and, consequently, it is possible to verifies that 
\begin{equation}
u_{\lambda,\vec{\alpha}}\rightarrow 0\mbox{ in }L^{\infty}(\overline{\Omega}),\mbox{ when }|\vec{\alpha}|\rightarrow\infty.
\end{equation}
With this, we proved the equation (\ref{eq1}) of the Theorem \ref{T3}.

On the other hand, let $\vec{\alpha}_n$ be a sequence such that $|\vec{\alpha}_n|\rightarrow0$ and denote by $u_n=u_{\lambda,\vec{\alpha}_n}$ a positive solution of $(P)_p$ with $\vec{\alpha}$ replaced by $\vec{\alpha}_n$. Observing that $u_{\lambda,\vec{\alpha}_n}$ verifies
\begin{equation*}
-\Delta u_{\lambda,\vec{\alpha}_n}+u_{\lambda,\vec{\alpha}_n}^{p-1}\vec{\alpha}_n\cdot \nabla u_{\lambda,\vec{\alpha}_n}=\left(\lambda-\phi_{u_{\lambda,\vec{\alpha}_n}}(x) \right)u_{\lambda,\vec{\alpha}_n},\mbox{ in }\Omega.
\end{equation*}
Since $u_{\lambda,\vec{\alpha}_n}\leq M_{\lambda}$ by Lemma \ref{lem01}, the Elliptic Regularity (see \cite{GT}) guarantee $\|u_{\lambda,\vec{\alpha}_n}\|_{2,q}\leq C$ for some constant independent of $n$ and $q>1$. As $W^{2,q}(\Omega)\hookrightarrow C^1(\overline{\Omega})$ compactly for $q>N$, then up to a subsequence if necessary, we obtain
\begin{equation*}
u_{\lambda,\vec{\alpha}_n}\rightarrow u_*\mbox{ in }C^1(\overline{\Omega})
\end{equation*}
and, as a consequence, we get
\begin{equation*}
	\left\{
	\begin{array}{lcl}
		-\Delta u_*=(\lambda-\phi_{u_*}(x))u_*,\mbox{ in }\Omega\\
		u_*=0,\mbox{ on }\partial\Omega.
	\end{array}
	\right.
\end{equation*}
From Elliptic Regularity, $u_*$ is a classical solution of the above logistic equation, that is the same logistic problem of \cite{Alves-Delgado-Souto-Suarez}. Now, to complete the proof, we need to prove that $u_*\neq0$. If $u_*=0$, then we define $z_n=u_{\lambda,\vec{\alpha}_n}/|u_{\lambda,\vec{\alpha}_n}|_2$, $n\geq1$. Let $\xi\in H_0^1(\Omega)$ be a test function. Using $\xi$ as a test function, we get
\begin{equation*}
	\int_{\Omega}\nabla u_{\lambda,\vec{\alpha}_n}\nabla\xi dx-\int_{\Omega}u_{\lambda,\vec{\alpha}_n}^p(\vec{\alpha}_n\cdot\nabla\xi)dx=\int_{\Omega}(\lambda u_{\lambda,\vec{\alpha}_n}\xi-\phi_{u_{\lambda,\vec{\alpha}_n}}(x)u_{\lambda,\vec{\alpha}_n}\xi )dx
\end{equation*}
and, dividing by $|u_n|_2$
\begin{equation}\label{e3}
	\int_{\Omega}\nabla z_n\nabla\xi dx-\int_{\Omega}u_{\lambda,\vec{\alpha}_n}^{p-1}z_n(\vec{\alpha}_n\cdot\nabla\xi)dx=\int_{\Omega}(\lambda z_n\xi-\phi_{u_{\lambda,\vec{\alpha}_n}}(x)z_n\xi )dx.
\end{equation}
Taking $\xi=z_n$ as a test function, we conclude
\begin{equation*}
	\int_{\Omega}|\nabla z_n|^2dx\leq C|z_n|_{2}^{2}= C<\infty
\end{equation*}	
consequently, $(z_n)$ is bounded in $H_0^1(\Omega)$. Up to a subsequence if necessary, we have
\begin{equation*}
	z_n\rightharpoonup z\mbox{ in }H_0^1(\Omega)\quad\mbox{and}\quad z_n\rightarrow z\mbox{ in }L^q(\Omega),\quad q\in(2,2^*),
\end{equation*}
with $|z|_2=1$. On the other hand, since
\begin{equation*}
	u_{\lambda,\vec{\alpha}_n}^{p-1}z_n(\vec{\alpha}_n\cdot\nabla\xi)\rightarrow0\mbox{ a.e. in }\Omega\quad\mbox{and}\quad
	|u_{\lambda,\vec{\alpha}_n}^{p-1}z_n(\vec{\alpha}_n\cdot\nabla\xi)|\mbox{ is bounded in }L^1(\Omega)
\end{equation*}
we have, by Lebesgue's Theorem
\begin{equation}
	\int_{\Omega}u_{\lambda,\vec{\alpha}_n}^{p-1}z_n(\vec{\alpha}_n\cdot\nabla\xi)dx\rightarrow0,\mbox{ when }n\rightarrow\infty.
\end{equation}
Analogously,
\begin{equation}
	\int_{\Omega}\phi_{u_n}(x)z_n\xi dx\rightarrow0,\mbox{ when }n\rightarrow\infty.
\end{equation}
Hence, letting $n\rightarrow\infty$ in (\ref{e3}) yields
\begin{equation}
	\int_{\Omega}\nabla z\nabla\xi dx=\lambda\int_{\Omega}z\xi dx,\quad\forall\xi\in H_0^1(\Omega).
\end{equation}
As $z\geq0$ and $z\neq0$, we conclude that $z$ is an eigenfunction of the Laplacian and $\lambda=\lambda_1$, which is an absurd, because $\lambda>\lambda_1$. Concluding the proof.
\end{proof}

For to finish this section, we will to conclude the comparison between cases $p=1$ and $p>1$, we will analyze the behavior of the solutions when $p\rightarrow1^+$.

\begin{proof}[Proof of Theorem \ref{T4}]
Let $(p_n)$ be a sequence such that $p_n>1$ with $p_n\rightarrow1^+$ and denote by $u_n=u_{p_n}$ a positive solution of $(P)_{p_n}$ with $p$ replaced by $p_n$. Similarly, to the previous arguments, up to a subsequence
\begin{equation*}
u_n\rightarrow u_*\mbox{ in }C^1(\overline{\Omega}).
\end{equation*}
Since $u_n$ is a solution of $(P)_{p_n}$, we have
\begin{equation*}
\int_{\Omega}\nabla u_n\cdot\nabla\xi dx+\int_{\Omega}\xi(\vec{\alpha}\cdot \nabla u_n^{p_n})dx=\int_{\Omega}(\lambda u_n\xi-\phi_{u_n}(x)u_n\xi )dx,\quad\forall \xi\in H_0^1(\Omega)
\end{equation*}
and so
\begin{equation*}
\int_{\Omega}\nabla u_n\cdot\nabla\xi dx-\int_{\Omega}u_n^{p_n}(\vec{\alpha}\cdot \nabla\xi )dx=\int_{\Omega}(\lambda u_n\xi-\phi_{u_n}(x)u_n\xi )dx,\quad\forall \xi\in H_0^1(\Omega).
\end{equation*}
For $n\rightarrow+\infty$ in the above equation, we get
\begin{equation*}
\int_{\Omega}\nabla u_*\cdot\nabla\xi dx-\int_{\Omega}u_*(\vec{\alpha}\cdot \nabla\xi )dx=\int_{\Omega}(\lambda u_*\xi-\phi_{u_*}(x)u_*\xi )dx,\quad\forall \xi\in H_0^1(\Omega),
\end{equation*}	
and, consequently
\begin{equation*}
	\int_{\Omega}\nabla u_*\cdot\nabla\xi dx+\int_{\Omega}\xi(\vec{\alpha}\cdot \nabla u_* )dx=\int_{\Omega}(\lambda u_*\xi-\phi_{u_*}(x)u_*\xi )dx,\quad\forall \xi\in H_0^1(\Omega),
\end{equation*}
Thus, $u_*$ is a non-negative weak solution of $(P)_1$.
\end{proof}

\end{document}